\newcommand{\Z}{\mathbb{Z}}
\newcommand{\Q}{\mathbb{Q}}
\newtheorem{definition}{Definition}
\newtheorem{theorem}{Theorem}
\newtheorem{lemma}{Lemma}
\newtheorem{proposition}{Proposition}
\newtheorem{problem}{Problem}
\theoremstyle{remark}
\newtheorem{remark}{Remark}
\begin{document}
 \title[Simultaneous palindromes]{On Simultaneous palindromes}
%\subjclass[2010]{}
\keywords{Palindromes, Simultaneous palindromes}

\author[A. B\'erczes]{Attila B\'erczes}
\thanks{The research was supported in part by the University of Debrecen,
and by grants K100339 and NK104208 of the Hungarian National Foundation for Scientific Research.
This work was partially supported by the European Union and the European Social Fund through project Supercomputer, the national virtual lab (grant no.: 
TAMOP-4.2.2.C-11/1/KONV-2012-0010).}
\address{A. B\'erczes \newline
         \indent Institute of Mathematics, University of Debrecen \newline
         \indent H-4010 Debrecen, P.O. Box 12, Hungary}
\email{berczesa\char'100science.unideb.hu}

\author[V. Ziegler]{Volker Ziegler}
\thanks{The second author was supported by the Austrian Science Fund (FWF) under the project P~24801-N26.}
\address{V. Ziegler\newline
\indent Johann Radon Institute for Computational and Applied Mathematics\newline
\indent Austrian Academy of Sciences\newline
\indent Altenbergerstr. 69\newline
\indent A-4040 Linz, Austria}
\email{volker.ziegler\char'100ricam.oeaw.ac.at}

\begin{abstract}
A palindrome in base $g$ is an integer $N$ that remains the same when its digit expansion in base $g$ is reversed. Let $g$ and $h$ be given 
distinct integers $>1$. In this paper we discuss how many integers are palindromes in base $g$ and simultaneously palindromes in base $h$.
\end{abstract}

\maketitle

\section{Introduction}
 Let $a,g\in \Z$ with $a\geq 0$ and  $g\geq 2$. If $a$ has a symmetric digit expansion in base $g$, i.e. $a$ read from left to right is the same as read from
right to left,
then we call $a$ a palindrome in base $g$. In particular, we will use the following definition

\begin{definition}
Let $a$ be a positive integer with $g$-adic digit expansion
$$a=\sum_{i=0}^k a_ig^i, \qquad \text{with}\;\; a_i\in\{0,1,\ldots,g-1\}, \;\; \text{and}\;\;  a_k\not=0$$
then we write
$$\overline{(a)_g}=\sum_{i=0}^k a_ig^{k-i}$$
for the digit reversed companion to $a$. We call $a$ a palindrome in base $g$, if we have $a=\overline{(a)_g}$.
\end{definition}

There is a rich literature on integers that are as well palindromes for some fixed base $g$ as well have some other property like being a square
\cite{Korec:1991},
a $k$-th power \cite{Hernandez:2006,Cilleruelo:2009}, almost a $k$-th power \cite{Simmons:1972,Luca:2008a}, member of a recurrence sequence \cite{Luca:2003d} or
some
other sequences (in case of arithmetic sequence see \cite{Col:2009}), a prime \cite{Banks:2004} and many other properties. Also some authors considered
the case of palindromes that are palindromes in two or more bases simultaneously. In particular, Goins \cite{Goins:2009} proved that there are only finitely
many palindromes in base $10$ with $d\geq 2$ digits and $N$ is at the same time a palindrome with $d$ digits in a base $b\not=10$ (for a similar result see also
\cite{Basic:2012}).
On the other hand Luca and Togb\'e \cite{Luca:2008a} proved that there are only finitely many binary palindrome which are decimal Palindromes of the form
$10^n\pm 1$.

In this paper we consider the following problem

\begin{problem}\label{Prob:Simul_Pal}
For which pairs of bases $(g,h)\in \Z^2$ with $2\leq h<g$ are there only finitely many positive integers that are simultaneously palindromes in base $g$ and
$h$.
\end{problem}

Note that the answer to this problem is negative if $g=h^k$ for some $k\geq 2$, since all integers of the form $g^n\pm 1$ are palindromes in base $g$ as well as
in base $h$.
Therefore we consider Problem \ref{Prob:Simul_Pal} only for bases $g,h$ such that $g$ is not a perfect power of $h$.

Unfortunately we cannot give an answer to this problem yet, but using ideas form Luca and Togb\'e \cite{Luca:2008a}, who proved the finiteness
of binary palindromes of the form $10^n\pm 1$, we were able to prove the following theorem.

\begin{theorem}\label{th1}
Let $2\leq h<g$ be integers and assume that $h|g$ and that $h$ and $g$ are multiplicatively independent. If $N=a g^n+\overline{(a)_g}$
is a palindrome in base $h$, then
\begin{multline*}
n\leq \max\left\{\frac{\log ga}{\log h},\frac{\log g(\log (agh))^2}{(\log 2)^3},1.91\cdot 10^7 \log a (\log\log a)^3, \right.\\
\left. \phantom{\frac gh} 5.11\cdot 10^{12}\log g \log (agh) (\log(\log g \log (agh)))^2\right\}.
\end{multline*}
\end{theorem}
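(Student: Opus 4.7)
I would follow the strategy of Luca and Togb\'e: exploit the divisibility $h \mid g$ together with the base-$h$ palindrome condition to extract a small linear form in logarithms of $g$ and $h$, and invoke Matveev's theorem to bound $n$. Write $N$ in base $h$: let $m$ be the integer with $h^m \leq N < h^{m+1}$, so $N = \sum_{j=0}^m d_j h^j$ with $d_j \in \{0, 1, \ldots, h-1\}$. Since $h \mid g$ we have $h^n \mid ag^n$, hence $N \equiv \overline{(a)_g} \pmod{h^n}$. Once $n$ exceeds the base-$h$ digit length $t$ of $\overline{(a)_g}$ (the first branch $n \leq \log(ga)/\log h$ in the maximum takes care of the alternative), the lowest $n$ base-$h$ digits of $N$ are precisely those of $\overline{(a)_g}$ padded with $n-t$ zeros.

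Using the palindrome identity $d_j = d_{m-j}$, the topmost $n$ digits of $N$ are then the reverse of this bottom block, provided the two blocks do not overlap, i.e.\ $2n \leq m$; the opposite regime $2n > m$ is treated by a direct digit-counting argument and yields the branch $\log g(\log(agh))^2/(\log 2)^3$. Under $2n \leq m$, writing $\overline{(a)_g} = \sum_{j=0}^{t-1} c_j h^j$ and setting $T := \sum_{i=0}^{t-1} c_{t-1-i}\,h^i$ (a quantity depending on $a, g, h$ only), we obtain
\begin{equation*}
N = T\, h^{m-t+1} + E + \overline{(a)_g}, \qquad 0 \leq E < h^{m-n+1}.
\end{equation*}
Substituting $N = ag^n + \overline{(a)_g}$ and dividing by $T\, h^{m-t+1}$ gives
\begin{equation*}
\left|\frac{a g^n}{T\, h^{m-t+1}} - 1\right| < h^{t-n},
\end{equation*}
so taking logarithms yields the upper bound $|\Lambda| < 2 h^{t-n}$ for the linear form
\begin{equation*}
\Lambda := n \log g - (m-t+1) \log h + \log(a/T).
\end{equation*}

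The multiplicative independence of $g$ and $h$ forces $\Lambda \neq 0$: $\Lambda = 0$ would give $g^n h^{-(m-t+1)} = T/a$, which cannot hold for arbitrarily large $n$. Applying Matveev's lower bound to $\Lambda$ — a linear form in three rational logarithms of heights $\log g$, $\log h$, and $\log(aT) = O(\log(agh))$, with integer coefficients of size $O(n \log g/\log h)$ — produces
\begin{equation*}
|\Lambda| \geq \exp\bigl(-C\, \log n\, \log g\, \log h\, \log(agh)\bigr)
\end{equation*}
for an explicit absolute constant $C$. Comparing with the upper bound and simplifying yields $(n-t)\log h \ll \log n \cdot \log g \cdot \log h \cdot \log(agh)$, and solving this inequality for $n$ (via the standard estimate $x \leq A \log x \Rightarrow x \ll A(\log A)^2$) produces the branch $\log g\, \log(agh)\, (\log\log(g\log(agh)))^2$ in the maximum.

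The remaining branch $\log a\, (\log\log a)^3$ presumably arises from an analogous, more refined Matveev application in the regime where the height $\log a$ becomes dominant — e.g.\ by treating the contribution of $T$ separately and applying a sharper estimate (such as the Laurent--Mignotte bound on linear forms in two logarithms) in which the coefficient bound is governed by $\log a$ rather than by $n$. The principal obstacles will be: (i) handling the overlapping case $2n > m$ by direct digit-counting to recover the second branch with the specific constant $(\log 2)^3$; (ii) establishing nonvanishing of $\Lambda$ uniformly from the multiplicative independence hypothesis, without spawning extra case divisions; and (iii) carrying the explicit constants of Matveev's theorem through the estimates to recover the numerical prefactors $1.91\cdot 10^7$ and $5.11\cdot 10^{12}$ appearing in the statement.
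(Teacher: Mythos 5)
Your core mechanism is the paper's: since $h\mid g$, the congruence $N\equiv\overline{(a)_g}\pmod{h^n}$ pins down the low-order base-$h$ digits, the palindrome property in base $h$ transfers them to the top, and comparing $ag^n$ with $T h^{k}$ (your $T$ is the paper's $\overline{\left(\overline{(a)_g}\right)_h}$, and your $\Lambda$ is their $\log\alpha-k\log h+n\log g$ with $\alpha=a/T$) yields a small linear form in three logarithms. The reduction of Theorem~\ref{th1} to this inequality via $m\approx n-\log a/\log g$ and the final resolution of $n\le A\log n+B$ also match the paper.

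The genuine gap is the case analysis. Matveev's theorem, in the form the paper quotes (and in most standard statements), requires the logarithms $\log\alpha,\log h,\log g$ to be linearly independent over $\Z$, not merely $\Lambda\neq 0$; since $\alpha=a/T$ is an essentially arbitrary rational, it may well be multiplicatively dependent on $g$ and $h$, and your nonvanishing argument does not address this. The paper therefore splits into two cases: if $\alpha,g,h$ are multiplicatively independent, Matveev gives $m\ll\log g\log(agh)\log n$, producing the fourth branch of the maximum; if $\alpha^r=g^sh^{-t}$, the three-term form collapses to the two-term form $|(n+s)\log g-(k+t)\log h|$, the exponents $r,s,t$ are bounded via $p$-adic valuations (this is where $(\log 2)^3$ and the branch $\log g(\log(agh))^2/(\log 2)^3$ actually originate), and the Laurent--Mignotte--Nesterenko two-logarithm bound gives $m\ll(\log n)^2$, which after substituting $m\approx n-\log a/\log g$ produces the branch $1.91\cdot10^7\log a(\log\log a)^3$. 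Your proposal misassigns both of these branches: the term $\log g(\log(agh))^2/(\log 2)^3$ does not come from an ``overlapping blocks'' case $2n>m$ (no such split is needed, since the upper and lower bounds on $N$ use only the leading block of determined digits), and the $\log a(\log\log a)^3$ term does not come from a regime where the height $\log a$ dominates, but from solving $n>142(\log n)^2+\log a/\log g+1$ in the multiplicatively dependent case. Without this dichotomy you cannot legitimately invoke Matveev, and you cannot account for all four branches of the stated maximum.
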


Note that the result of Luca and Togb\'e \cite{Luca:2008a} can be derived form our Theorem \ref{th1} togehter with an extensive computer search.
In particular, if we put $h=2,g=10$ and $a$ some integer smaller than $10^6$ then Theorem \ref{th1} implies that $N=a10^n+\overline{(a)_{10}}$ can be a binary
palindrome only if $n\leq 2.65\cdot 10^{15}$. Our second result is:

\begin{theorem}\label{th2}
Let $2\leq h<g$ be fixed integers and assume that $h|g$ and that $h$ and $g$ are multiplicatively independent. For all $\epsilon>0$ there are at least
$\Omega_{\epsilon,g,h}(x^{1/2-\epsilon})$ palindromes $N\leq x$ in base $g$ that are not palindromes in base $h$. Moreover the constants involved in the
$\Omega$-term are explicitly computable.
\end{theorem}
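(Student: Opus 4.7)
My plan is to construct an explicit family of base-$g$ palindromes of cardinality $\Omega(\sqrt{x})$ below $x$ whose members are \emph{manifestly} not palindromes in base $h$, using only the hypothesis $h\mid g$. This already gives the claimed $\Omega_{\epsilon,g,h}(x^{1/2-\epsilon})$ bound for every $\epsilon>0$, and in fact the stronger $\Omega_{g,h}(\sqrt{x})$.

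Since $h\mid g$ and $g>h$, write $g=hk$ with $k\ge 2$; then the set $S:=\{h,2h,\ldots,(k-1)h\}\subset\{1,\ldots,g-1\}$ of nonzero multiples of $h$ strictly less than $g$ is nonempty of size $k-1$. I propose to consider the family $\mathcal F$ of base-$g$ palindromes whose lowest (equivalently, by palindromy, whose leading) base-$g$ digit $a_0$ lies in $S$. A base-$g$ palindrome of length $\ell$ is determined by its $\lceil\ell/2\rceil$ digits $a_0,a_1,\ldots,a_{\lceil\ell/2\rceil-1}$. Fixing $a_0\in S$ in one of $k-1$ ways and letting the remaining $\lceil\ell/2\rceil-1$ digits range freely over $\{0,\ldots,g-1\}$ produces $(k-1)g^{\lceil\ell/2\rceil-1}$ members of $\mathcal F$ of length $\ell$. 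Summing this geometric progression over all lengths $\ell\le\lfloor\log_g x\rfloor$ (for which every such palindrome satisfies $N<g^\ell\le x$) yields at least $C_{g,h}\sqrt{x}$ elements of $\mathcal F$ with $N\le x$, with $C_{g,h}>0$ an explicit constant depending only on $g$ and $h$ (essentially $C_{g,h}=2(k-1)/(g-1)$, up to small boundary adjustments).

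Finally I verify that no $N\in\mathcal F$ is a base-$h$ palindrome: because $h\mid g$, reducing the base-$g$ expansion $N=\sum_i a_i g^i$ modulo $h$ gives $N\equiv a_0\pmod h$, and by construction $h\mid a_0$, so $h\mid N$. Hence the units digit of $N$ in base $h$ is $0$, whereas the leading base-$h$ digit of any positive integer is necessarily nonzero, and therefore $N\ne \overline{(N)_h}$. The plan is essentially combinatorial and contains no hard step: it does not even rely on Theorem~\ref{th1} nor on the multiplicative-independence hypothesis. The only things requiring care are the elementary geometric-sum bookkeeping, including the short-length cases $\ell=1,2$ and the harmless discarding of the partial contribution of the top length $\lfloor\log_g x\rfloor+1$.
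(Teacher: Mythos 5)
Your proof is correct, and it takes a genuinely different and much more elementary route than the paper. The paper deduces Theorem~\ref{th2} from Lemma~\ref{lem:Main}: it considers base-$g$ palindromes whose expansion begins with a fixed digit block followed by $m=(\log n)^3$ zeros, and invokes the lemma (whose proof rests on Matveev's bound and the two-logarithm estimate of Laurent et al., and genuinely uses the multiplicative independence of $g$ and $h$) to conclude that such palindromes cannot be palindromes in base $h$ once $n$ exceeds an explicit $C_{g,h}$; the forced run of $(\log n)^3$ zeros is precisely what costs the factor $x^{-\epsilon}$ in the count. Your argument instead exploits only $h\mid g$: a base-$g$ palindrome whose last digit $a_0$ (which equals its leading digit, hence is nonzero) is a multiple of $h$ satisfies $N\equiv a_0\equiv 0\pmod h$, so $N\geq h$ has at least two base-$h$ digits, ends in $0$ in base $h$, and begins with a nonzero digit, hence cannot be a base-$h$ palindrome; counting such palindromes of each length $\ell\leq\lfloor\log_g x\rfloor$ gives the stronger, $\epsilon$-free bound $\Omega_{g,h}(x^{1/2})$ with an explicit constant, and even a positive proportion $(k-1)/(g-1)$ of all base-$g$ palindromes. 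Both the divisibility step and the geometric-sum bookkeeping are sound. What the paper's heavier machinery buys is not needed for Theorem~\ref{th2} as stated: it produces non-palindromes of the rigid shape $a g^n+\overline{(a)_g}$-type that feed into Theorem~\ref{th1} and the numerical work, but for the bare counting statement your construction suffices and is sharper; it would be worth noting that your argument also works without the multiplicative independence hypothesis (e.g.\ for $g=h^k$), which shows that Theorem~\ref{th2} is not sensitive to the obstruction that makes Problem~\ref{Prob:Simul_Pal} fail in that case.
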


This theorem means that for $h \mid g$ most numbers which are palindromes in base $g$ are {\it not palindromes} in base $h$.

The above Theorems \ref{th1} and \ref{th2} can both be deduced from the following lemma

\begin{lemma}\label{lem:Main}
Let $2\leq h <g$ be integers and assume $h|g$ and $h$ and $g$ are multiplicatively independent. Moreover, let $N$ be a palindrome in bases $g$ of the form
$$ag^n+\sum_{i=0}^{n-m-1} a_i g^i, \qquad a_i \in \{0,1,\ldots,g-1\},$$
where $n \geq m+1$ and $a=(a_{n-m-1}\ldots a_0)_g$ is an $(n-m)$-digit number in base $g$. This means that $N$ is a palindrome in base $g$ starting with the
digits of $a$ (in base $g$) followed by $m$ zeros.
If
\begin{multline*}
m>C(a,g,h,n):=\max\left\{\frac{\log ga}{\log h},\frac{\log g(\log (agh))^2}{(\log 2)^3},\right.\\
\left.\phantom{\frac gh} 142 (\log n)^2, 2.022\cdot 10^{10} \log g \log (agh) \log n \right\}
\end{multline*}
then $N$ cannot be a palindrome in base $h$.

More precisely assume that $m>(\log ga)/(\log h)$ and write
$\alpha=a/\overline{\left(\overline{(a)_g}\right)_h}$. If $g,h$ and $\alpha$ are multiplicatively independent then $N$ can be a palindrome in base $h$
only if
\begin{equation}\label{eq:mainlem1}\left|\log \alpha-k\log h+n\log g\right|<\frac{11}{9h^{-m}}.\end{equation}
If $\alpha^r=g^sh^{-t}$ for some integers $r,s,t$ not all zero, then $N$ is a palindrome in base $h$ only if
\begin{equation}\label{eq:mainlem2}\left|(n+s)\log g-(k+t)\log h\right|<\frac{11r}{9h^{-m}}.\end{equation}
\end{lemma}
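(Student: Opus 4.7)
The plan is to use the hypothesis $h\mid g$ to split $N$'s base-$h$ expansion into a non-interacting upper and lower block, then exploit the palindromic symmetry to force an (approximate) algebraic identity between $a$, $g$, $h$, and $\beta := \overline{(\overline{(a)_g})_h}$ whose logarithm produces \eqref{eq:mainlem1}.

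First I would set up the base-$h$ decomposition. Writing $g = hf$ with $f = g/h \ge 2$ gives $N = (af^n)h^n + \bar a$ where $\bar a = \overline{(a)_g}$. Since $a$ is an $(n-m)$-digit base-$g$ number, $\bar a < g^{n-m} \le g\cdot a$, and the hypothesis $m > \log(ga)/\log h$ yields $\bar a < h^m < h^n$. Hence the base-$h$ expansion of $N$ splits cleanly: the lowest $\ell := \lfloor\log\bar a/\log h\rfloor + 1 \le m$ positions hold the digits of $\bar a$, positions $\ell,\dots,n-1$ are zero, and positions $n,\dots,k$ (with $k := n + L'$ and $L' := \lfloor\log(af^n)/\log h\rfloor$) hold the digits of $af^n$. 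The palindrome condition forces the unit digit $b_0 = \bar a \bmod h$ to equal the nonzero leading digit $b_k$, giving $\beta \ge h^{\ell-1}$.

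Next I would extract the algebraic identity. Applying $b_i = b_{k-i}$ separately on the sub-ranges $[0,\ell-1]$, $[\ell,n-1]$ and $[n,\lfloor k/2\rfloor]$ shows that the top $\ell$ base-$h$ digits of $af^n$ read in reverse equal $\beta$, while the next $n-\ell$ digits of $af^n$ vanish. In the generic case $L'\ge n-1$ this produces
\[
af^n \;=\; \beta\,h^{L'+1-\ell} + r, \qquad 0 \le r < h^{L'+1-n}.
\]
(The sub-case $L'\le n-2$ forces $r = 0$ and an exact identity, which in turn would make $\alpha,g,h$ multiplicatively dependent.) Multiplying by $h^n$ gives $ag^n = \beta h^{k+1-\ell} + rh^n$; using $\beta \ge h^{\ell-1}$ the relative error is at most $h^{\ell-n}/\beta \le h^{1-n} \le h^{-m}$. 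Taking logarithms with $\alpha=a/\beta$ and applying the standard estimate $|\log(1+x)|\le \tfrac{11}{9}|x|$ for $|x|$ sufficiently small then yields \eqref{eq:mainlem1} with the exponent $k+1-\ell$ in place of $k$.

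Finally, if $\alpha^r = g^sh^{-t}$ for integers $r,s,t$ not all zero (necessarily with $r\ne 0$, since $g,h$ are multiplicatively independent), substituting $\log\alpha = (s\log g - t\log h)/r$ into the preceding bound and multiplying through by $r$ to clear the denominator produces \eqref{eq:mainlem2}. The principal technical obstacle is the palindrome bookkeeping in the middle step --- dissecting $[0,k]$ into the three sub-ranges above and tracking whether each mirror falls into the top block, the middle zeros, or the bottom block --- together with the explicit tracking of the constant $11/9$ through the logarithmic estimate. The degenerate sub-case $L'\le n-2$ (which can arise only when $g<h^2$) requires separate verification, but it leads directly to an exact form of \eqref{eq:mainlem2} without any additional work.
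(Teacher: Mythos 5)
Your derivation of the two Diophantine inequalities is sound and follows essentially the same route as the paper: the hypothesis $h\mid g$ pins down the bottom block of base-$h$ digits of $N$ as those of $\overline{(a)_g}$, the palindrome condition in base $h$ then determines the top block as $\overline{\left(\overline{(a)_g}\right)_h}$, and comparing $ag^n$ with $\overline{\left(\overline{(a)_g}\right)_h}h^k$ gives a relative error at most $h^{-m}$ whose logarithm is \eqref{eq:mainlem1}; multiplying by $r$ gives \eqref{eq:mainlem2}. (Your exponent $k+1-\ell$ is just a different normalisation of the paper's $k$, and your bookkeeping of the middle zero block is a slightly more explicit version of the paper's $\tilde m$.)

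However, this proves only the ``more precisely'' half of the lemma. The main assertion --- that $m>C(a,g,h,n)$ already forces $N$ not to be a palindrome in base $h$ --- is nowhere addressed, and it cannot follow from \eqref{eq:mainlem1} and \eqref{eq:mainlem2} alone: those are \emph{upper} bounds for a linear form in logarithms, and to derive a contradiction from $m>C(a,g,h,n)$ one needs a matching \emph{lower} bound. This is precisely where the constants $2.022\cdot 10^{10}\log g\log(agh)\log n$ and $142(\log n)^2$ come from: in the multiplicatively independent case the paper applies Matveev's theorem to $\log\alpha-k\log h+n\log g$ (with $A_1=\log(agh)$, $A_2=\log h$, $A_3=\log g$ and the estimate $B<2n$ of Lemma \ref{lem:BoundB}); in the dependent case it must first bound $|r|,|s|,|t|$ explicitly in terms of $a,g,h$ (Lemma \ref{lem:dependence}, via the $p$-adic valuations of $g$, $h$ and $\alpha$ at two primes with independent exponent vectors --- without this the factor $r$ in \eqref{eq:mainlem2} is uncontrolled) and then apply the Laurent--Mignotte--Nesterenko two-logarithm theorem. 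None of this machinery appears in your proposal, so the first and operative part of the lemma --- the part actually used to prove Theorems \ref{th1} and \ref{th2} --- remains unproved.
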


Note that for proving Theorems \ref{th1} and \ref{th2} only the first part of the lemma is essential. However the second part of the lemma is useful if one
wants
to find all simultaneous palindromes of a special form, e.g. finding all binary palindromes of the form $a 10^n+\overline{(a)_{10}}$ for some fixed $a$.

In the next section we will give a proof of the fundamental Lemma \ref{lem:Main} and in Section \ref{Sec:ProofTh} we deduce Theorems \ref{th1} and \ref{th2}
from that lemma.
In Section \ref{Sec:Numeric} we present some numeric results on simultaneous palindromes in bases $2$ and $10$. In the last section we present some numeric
results for other bases.

\section{Proof of the main lemma}\label{Sec:MainLemma}

The purpose of this section is to prove Lemma \ref{lem:Main}. Therefore assume that $N$ is a palindrome in base $g$ as well as in base $h$. Let
$n_a=\left\lfloor (\log a)/(\log g) \right\rfloor+1$ be the number of digits of $a$ in base $g$. Since $N$ is a palindrome in bases $g$ as well as in base
$h$ and since $h|g$ we have
$$N \equiv \overline{(a)_g} \mod h^{m+n_a}.$$
Therefore we know the last $m+n_a$ digits of $N$ in base $h$, provided that $\overline{(a)_g}<h^{m+n_a}$.

\begin{lemma}\label{lem:loga<<m}
$\overline{(a)_g}<h^{m+n_a}$ if $m>(\log ga)/(\log h)$.
\end{lemma}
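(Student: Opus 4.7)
The plan is to trap $\overline{(a)_g}$ between two simple quantities and then invoke the hypothesis $m>(\log ga)/(\log h)$ essentially directly. The key observation is that reversing the base-$g$ digits of $a$ cannot produce a number with more digits than $a$ itself; in fact $\overline{(a)_g}$ has exactly $n_a$ base-$g$ digits (counting a possible leading zero), so $\overline{(a)_g} \leq g^{n_a}-1$. Thus it suffices to show $g^{n_a} \leq h^{m+n_a}$, or equivalently $g^{n_a} \leq h^m$, modulo a small slack we can absorb.

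Concretely, I would first unwind the definition $n_a = \lfloor (\log a)/(\log g)\rfloor + 1$ to get $n_a \leq (\log a)/(\log g)+1$, which rearranges to $g^{n_a}\leq g\cdot a = ga$. Combined with the bound on the reversal in the previous paragraph, this gives the clean inequality $\overline{(a)_g} < ga$. Next I would use the hypothesis: $m>(\log ga)/(\log h)$ is equivalent to $h^m > ga$, so
\[
\overline{(a)_g} < ga < h^m \leq h^{m+n_a},
\]
which is exactly the conclusion.

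There is no real obstacle here; the only thing to be a little careful about is not confusing the two interpretations of $n_a$ in the paper. The statement of Lemma \ref{lem:Main} presents $a$ as an $(n-m)$-digit number in base $g$, while Lemma \ref{lem:loga<<m} reuses the explicit formula $n_a = \lfloor(\log a)/(\log g)\rfloor+1$. Both agree (they are just the number of base-$g$ digits of $a$), and the argument above only uses the latter formula, so no compatibility issue arises. The whole proof is a two-line chain of inequalities and does not require the multiplicative independence or divisibility hypotheses on $g$ and $h$.
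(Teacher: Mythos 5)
Your proof is correct and follows essentially the same route as the paper: both establish $\overline{(a)_g}<ga$ from the digit count $n_a$ and then observe that $m>(\log ga)/(\log h)$ gives $h^{m+n_a}\geq h^m>ga$. Your version is if anything slightly cleaner, since you simply discard the factor $h^{n_a}\geq 1$ rather than carrying the $-(\log a)/(\log g)-1$ term along as the paper does.
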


\begin{proof}
Note that $a$ and $\overline{(a)_g}$ have the same number of $g$-adic digits, i.e. $\overline{(a)_g}<ag$. Using the formula for $n_a$ we see that
$\overline{(a)_g}<h^{m+n_a}$ if
$ga<h^mh^{\log a/\log g +1}$, hence
$$\frac{\log ga}{\log h}-\frac{\log a}{\log g}-1<\frac{\log ga}{\log h}<m$$
implies $\overline{(a)_g}<h^{m+n_a}$.
\end{proof}

Therefore we assume from now on $m>(\log ga)/(\log h)$. Because $N$ is a palindrome in base $h$ we also know the first $m+n_a$ digits of $N$ in base
$h$. In particular, we have
$$N=\overline{\left(\overline{(a)_g}\right)_h}h^k+\sum_{i=0}^{k-\tilde m-1} b_i h^i,$$
where $\tilde m=m+n_a-\tilde n_a$ with $\tilde n_a$ denoting the number of digits of $\overline{\left(\overline{(a)_g}\right)_h}$ in base $h$, and where
$$k=\left\lfloor \frac{\log N -\log \overline{\left(\overline{(a)_g}\right)_h}}{\log h} \right\rfloor. $$
Since
$$m+n_a=\tilde m+\left\lfloor \frac{\log \overline{\left(\overline{(a)_g}\right)_h}}{\log h}\right\rfloor +1$$
we have
$$\tilde m = m+\left\lfloor \frac{\log \overline{(a)_g}}{\log g}\right\rfloor -\left\lfloor \frac{\log \overline{\left(\overline{(a)_g}\right)_h}}{\log
h}\right\rfloor.$$
This yields the following inequality for $N$:
$$\overline{\left(\overline{(a)_g}\right)_h}h^k+h^{k-\tilde m}>N>\overline{\left(\overline{(a)_g}\right)_h}h^k.$$
Dividing this inequality through $\overline{\left(\overline{(a)_g}\right)_h}h^k$ yields
\begin{equation}\label{IEq:Dioph}
 \left|\frac{ag^n}{\overline{\left(\overline{(a)_g}\right)_h}h^k}-1\right|<
 \frac{h^{-\tilde m}}{\overline{\left(\overline{(a)_g}\right)_h}}
\end{equation}

On the other hand
$$\frac{h^{-\tilde m}}{\overline{\left(\overline{(a)_g}\right)_h}}=h^{-m-\left\lfloor \frac{\log \overline{(a)_g}}{\log g}\right\rfloor +\left\lfloor
\frac{\log \overline{\left(\overline{(a)_g}\right)_h}}{\log h}\right\rfloor -\frac{\log \overline{\left(\overline{(a)_g}\right)_h}}{\log h}} \leq
h^{-m}\leq\frac 14$$
provided $m\geq 2$. Writing $\alpha:= a/\overline{\left(\overline{(a)_g}\right)_h}$ and using that the inequality $\log |1-x|\leq 11 x/9$ holds,
provided $x\leq 1/4$, which can easily be proved by a Taylor expansion with Cauchy's remainder term
from  equation \eqref{IEq:Dioph} we obtain
\begin{equation}\label{IEq:Log}
\left|\log \alpha-k\log h+n\log g\right|\leq \frac{11}{9h^{-m}},
\end{equation}
which is inequality \eqref{eq:mainlem1} in Lemma \ref{lem:Main}. Inequality \eqref{eq:mainlem2} is deduced from \eqref{IEq:Log} by multiplying it by $r$ and
noting that
$r\log \alpha= s\log g-t\log h$.

We distinguish now between two cases. The first case is that $\alpha$ is multiplicatively independent of $g$ and $h$ and the second is that $\alpha$ is
multiplicatively dependent of $g$ and $h$.
The first case requires lower bounds for linear forms in three logarithms (we will use a result due to Matveev \cite{Matveev:2000}) and in the second case our
inequality will reduce to
an inequality in linear forms in two logarithms, where sharper bounds are known (we will use a result due to Laurent et. al. \cite{Laurent:1995}). Unfortunately
using this result
will involve the prime decompositions of $g,h$ and $\alpha$.

We start with the first case. Let us state Matveev's theorem \cite{Matveev:2000}:

\begin{theorem}\label{Matveev:Th}
Denote by $\alpha_1,\ldots,\alpha_n$ algebraic numbers, not
$0$ nor $1$, by $\log\alpha_1,\ldots$, $\log\alpha_n$
determinations of their logarithms, by $D$ the degree over $\Q$ of
the number field $K = \Q(\alpha_1,\ldots,\alpha_n)$, and by $b_1,\ldots,b_n$
rational integers. Furthermore let $\kappa=1$ if $K$ is real
and $\kappa=2$ otherwise. Choose
$$A_i\geq \max\{D h(\alpha_i),|\log\alpha_i|\} \quad
(1\leq i\leq n),$$ where $h(\alpha)$ denotes the absolute logarithmic
Weil height of $\alpha$ and
$$B=\max\{1,\max \{ |b_j| A_j /A_n: 1\leq j \leq n \}\}.$$
Assume $b_n\not=0$ and
$\log\alpha_1,\ldots,\log\alpha_n$ are linearly independent
over $\Z$; then
$$\log |b_1\log \alpha_1+\cdots+b_n \log \alpha_n|\geq -C(n)C_0 W_0 D^2 \Omega,$$
with
\begin{gather*}
\Omega=A_1\cdots A_n, \\
C(n)=C(n,\kappa)= \frac {16}{n! \kappa} e^n (2n +1+2 \kappa)(n+2)
(4(n+1))^{n+1} \left( \frac 12 en\right)^{\kappa}, \\
C_0= \log\left(e^{4.4n+7}n^{5.5}D^2 \log(eD)\right), \quad W_0=\log(1.5eBD \log(eD)).
\end{gather*}
\end{theorem}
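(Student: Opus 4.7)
The plan is to establish Matveev's lower bound by the method of interpolation determinants combined with a Kummer-theoretic induction on the number of variables $n$, following the strategy pioneered by Laurent and sharpened by Matveev himself. I would argue by contradiction, assuming
\[
\log|\Lambda| < -C(n)\,C_0\,W_0\,D^2\,\Omega,
\]
where $\Lambda = b_1 \log \alpha_1 + \cdots + b_n \log \alpha_n$, and drive towards a numerical impossibility.

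First, I would choose integer parameters $L_0, L_1, \ldots, L_n$ and $M$ whose sizes are calibrated to $A_1,\ldots,A_n$, $B$ and $D$, and assemble an $M \times M$ interpolation matrix whose rows are indexed by tuples $(\ell_0, \ell_1,\ldots,\ell_n)$ with $0 \leq \ell_0 < L_0$ and $0 \leq \ell_i < L_i$, and whose columns are indexed by evaluation points $\tau \in \{0,1,\ldots,M-1\}$. A typical entry has the shape
\[
\binom{\tau}{\ell_0}\,\alpha_1^{\tau \ell_1}\cdots \alpha_n^{\tau \ell_n}.
\]
The determinant $\Delta$ of this matrix lies in the number field $K = \Q(\alpha_1,\ldots,\alpha_n)$, and the entire proof rests on playing two antagonistic bounds on $|\Delta|$ against each other.

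For the upper (analytic) bound, the assumed smallness of $\Lambda$ lets one eliminate $\log\alpha_n$ from each row up to a controllable error; then multilinearity of the determinant, together with a Schwarz-lemma / generalised Vandermonde estimate exploiting the extra vanishing order $L_0$, forces $|\Delta|$ to be extremely small. For the lower (arithmetic) bound, Liouville's inequality applied to $\Delta \in K$ gives $|\Delta| \geq H(\Delta)^{-(D-1)}$ whenever $\Delta \neq 0$, with the height $H(\Delta)$ controlled explicitly in terms of the $A_i$ and $L_i$. The parameters must be tuned so that, under the contradiction hypothesis, the analytic upper bound is strictly smaller than the Liouville lower bound, forcing $\Delta = 0$.

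The hard part is then ruling out the vanishing of $\Delta$. A zero/multiplicity estimate -- Philippon's theorem on algebraic subgroups of $\mathbb{G}_m^n$, or the Masser--W\"ustholz analytic-subgroup variant -- guarantees $\Delta \neq 0$ under the hypothesis that $\log\alpha_1,\ldots,\log\alpha_n$ are $\Z$-linearly independent, provided the $L_i$ and $M$ satisfy the relevant dimension inequalities. To upgrade the classical exponential-in-$n$ constant $(16en)^{2n}$ to Matveev's sharper $C(n,\kappa)$, I would supplement this interpolation with a Kummer descent: adjoin a carefully chosen prime-order root $\alpha_n^{1/q}$, pull the linear form back to $K(\alpha_n^{1/q})$, and invoke the statement inductively with $n-1$ variables over this controlled extension. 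The bulk of the technical burden -- and the chief obstacle -- is the simultaneous optimisation of $L_0,\ldots,L_n$, $M$ and $q$, with separate bookkeeping for the real case $\kappa=1$ and the complex case $\kappa=2$, so that the compound constant arising from (i) the analytic upper bound, (ii) the Liouville bound, and (iii) the inductive invocation collapses precisely to the stated product $C(n)\,C_0\,W_0\,D^2\,\Omega$ with the explicit formulas for $C(n)$, $C_0$ and $W_0$ displayed in the theorem.
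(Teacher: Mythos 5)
This statement is Matveev's theorem, which the paper does not prove at all: it is quoted verbatim from \cite{Matveev:2000} and used as a black box in the proof of Lemma \ref{lem:Main}. So there is no in-paper argument to compare against; the only question is whether your sketch would stand on its own as a proof of Matveev's result.

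It would not. Your outline correctly names the architecture of the known proof --- an interpolation determinant whose entries are $\binom{\tau}{\ell_0}\alpha_1^{\tau\ell_1}\cdots\alpha_n^{\tau\ell_n}$, an analytic upper bound via a Schwarz-lemma argument after eliminating $\log\alpha_n$ using the smallness of $\Lambda$, a Liouville-type arithmetic lower bound, a zero estimate on $\mathbb{G}_m^n$ to rule out $\Delta=0$, and a Kummer descent to improve the dependence on $n$ --- but every step that carries actual content is deferred. For a theorem of this kind the entire substance lies in (i) the explicit choice of the parameters $L_0,\dots,L_n$, $M$ and the Kummer prime $q$ as functions of $D$, $B$ and the $A_i$; (ii) verifying that the analytic and arithmetic bounds on $|\Delta|$ are genuinely incompatible under the contradiction hypothesis with the \emph{stated} numerical constant, not merely some constant; and (iii) checking that the zero estimate's dimension inequalities are satisfied by that parameter choice, including the degenerate cases where the $\alpha_i$ lie close to a proper algebraic subgroup. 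Your proposal explicitly acknowledges that ``the bulk of the technical burden'' is this optimisation and then does not perform it, so the claim that the constants ``collapse precisely'' to $C(n)C_0W_0D^2\Omega$ with the displayed formulas for $C(n,\kappa)$, $C_0$ and $W_0$ is asserted, not proved. Also, your Liouville step as written, $|\Delta|\geq H(\Delta)^{-(D-1)}$, needs care: the determinant lives in $K$ (or a Kummer extension of it), and the correct statement involves the house or the norm together with denominator bounds coming from the $A_i$; getting the exponent and the height bound right is precisely where the factor $D^2\Omega$ originates. As it stands this is a roadmap to Matveev's fifty-page argument, not a proof; the honest course in this paper is to do what the authors do and cite the result.
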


We will apply Theorem \ref{Matveev:Th} directly to \eqref{IEq:Log}. Obviously $\kappa=1$ and $D=1$, and we may choose $A_1=\log (agh)$ since
$a,\overline{\left(\overline{(a)_g}\right)_h} <agh$,
$A_2=\log h$ and $A_3=\log g$. Next we have to estimate $B$:

\begin{lemma}\label{lem:BoundB}
 $B<2n$ if $m\geq 2>(\log h)/(\log g)+1$.
\end{lemma}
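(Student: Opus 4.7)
The plan is to unpack the definition of $B$ and bound each of the three candidate maxima separately. With the choice $A_1=\log(agh)$, $A_2=\log h$, $A_3=\log g$ and coefficients $b_1=1$, $b_2=-k$, $b_3=n$ from \eqref{IEq:Log}, one has
$$B=\max\left\{1,\;\frac{\log(agh)}{\log g},\;\frac{k\log h}{\log g},\;n\right\},$$
so showing $B<2n$ reduces to verifying each of the first three quantities is strictly smaller than $2n$. The bound $1<2n$ is trivial since $n\geq m+1\geq 3$.

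For the second entry I would split $\log(agh)/\log g=\log(ga)/\log g+\log h/\log g$. The hypothesis $h<g$ yields $\log h/\log g<1$, while the standing assumption $m>\log(ga)/\log h$ (in force from Lemma~\ref{lem:loga<<m}) together with $\log h<\log g$ gives $\log(ga)/\log g<\log(ga)/\log h<m$. Adding these shows $\log(agh)/\log g<m+1\leq n<2n$.

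For the third entry I would first bound $N$ crudely: since $\sum_{i=0}^{n-m-1}a_ig^i<g^{n-m}\leq g^n\leq ag^n$, one has $N<2ag^n$. Next, the definition of $k$ and the bound $\overline{(\overline{(a)_g})_h}\geq1$ give
$$k=\left\lfloor\frac{\log N-\log\overline{(\overline{(a)_g})_h}}{\log h}\right\rfloor\leq\frac{\log N}{\log h},$$
so
$$\frac{k\log h}{\log g}\leq\frac{\log N}{\log g}<\frac{\log(2a)}{\log g}+n.$$
It then suffices to show $\log(2a)/\log g<n$, which I would deduce from the hypothesis $m>\log(ga)/\log h$: since $\log g/\log h>1$ and $\log h\leq\log g$, one gets $m>1+\log a/\log g$, hence $n\geq m+1>2+\log a/\log g$, and since $g\geq2$ implies $\log 2/\log g\leq1$, this is at least $\log(2a)/\log g+1$.

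There is no serious obstacle here; the argument is a chain of elementary inequalities with some slack. The only place where one has to be slightly careful is making sure that the small $a$ case (in particular $a=1$, where $\log a=0$) is covered, which is why the inequality $g\geq2$ is used to dominate $\log 2/\log g$ by $1$ at the end.
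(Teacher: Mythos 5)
Your proof is correct and follows essentially the same route as the paper: unpack the definition of $B$ in Matveev's theorem and bound each ratio $|b_j|A_j/A_3$ by elementary estimates using the standing assumption $m>(\log ga)/(\log h)$ together with $n\geq m+1$. If anything, yours is the more careful version: you bound the actual quantity $|b_1|A_1/A_3=\log(agh)/\log g$ (the paper writes $(\log\alpha)/(\log g)$ there, which does not match its choice $A_1=\log(agh)$), and your clean estimate $N<2ag^n$ replaces the paper's line $2n=(2\log N-2\log a)/(\log g)$, which is only an approximate identity whose error is silently absorbed by the slack in the subsequent chain of inequalities.
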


\begin{proof}
First note that $2n>(\log \alpha)/(\log g)=|b_1|A_1/A_3$, since
$$m\geq 1>\frac{\log g+\log h}{2\log g}>\frac{\log \alpha}{2 \log g}$$
and $n\geq m$.

Furthermore, we have the inequality
\begin{multline*}
2n=\frac{2\log N-2\log a}{\log g}\\
>\frac{\log N-\log \overline{\left(\overline{(a)_g}\right)_h}-\log h-\log g+\log N-\log a}{\log g}\\
=k\frac{\log h}{\log g}+n-\frac{\log h}{\log g}-1>
k\frac{\log h}{\log g}=|b_2|\frac{A_2}{A_3}.
\end{multline*}
\end{proof}

Therefore we obtain $W=1.152 \log n$ provided $n>m\geq 10^6$. Now Theorem \ref{Matveev:Th} together with inequality \eqref{IEq:Log} yields
$$2.022\cdot 10^{10} \log g (\log agh)\log n > m,$$
which proves the first case. Note that the bound $2.022\cdot 10^{10} \log g (\log agh)\log n$ contains the bound $m\leq 10^6$ in any case.

Now we consider the second case. Since by assumption $\alpha,g$ and $h$ are multiplicatively dependent, but $g$ and $h$ are multiplicatively
independent thus there exist integers $r,s,t$ with greatest common divisor $1$ such that
$\alpha^r=g^sh^t$ with $r\neq 0$.

\begin{lemma}\label{lem:dependence}
\begin{gather*}
|r|\leq \frac{\log g \log h (\log agh)}{(\log 2)^3}, \quad |s|\leq \frac{\log h (\log agh)^2}{(\log 2)^3}, \quad
|t|\leq \frac{\log g (\log agh)^2}{(\log 2)^3}
\end{gather*}
\end{lemma}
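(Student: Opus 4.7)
The plan is to translate the multiplicative relation $\alpha^r=g^sh^t$ into a system of $\Z$-linear equations on the $p$-adic valuations of $\alpha,g,h$, and then to read off the three bounds from a single well-chosen $2\times 2$ minor.

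First I would collect the primes $p_1,\ldots,p_\ell$ dividing $agh$ and write $g=\prod p_i^{g_i}$, $h=\prod p_i^{h_i}$, $\alpha=\prod p_i^{e_i}$ with $e_i\in\Z$. Taking the $p_i$-adic valuation of $\alpha^r=g^sh^t$ yields $re_i=sg_i+th_i$ for each $i$, so the triple $(s,t,-r)$ lies in the kernel of the $\ell\times 3$ integer matrix $M$ whose rows are $(g_i,h_i,e_i)$. Multiplicative independence of $g$ and $h$ forces the $\ell\times 2$ left block of $M$ to have rank $2$ --- otherwise its two columns would be $\Q$-proportional and, clearing denominators, would produce a nontrivial relation $g^xh^y=1$ --- so there exist indices $p\ne q$ with $\Delta:=g_ph_q-g_qh_p\ne 0$. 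Multiplicative dependence of $\alpha,g,h$ then forces $M$ itself to have rank exactly $2$, so $\ker M$ is the rational line spanned by the cross product
$$v=\bigl(h_pe_q-h_qe_p,\;g_qe_p-g_pe_q,\;g_ph_q-g_qh_p\bigr)$$
of rows $p$ and $q$.

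Since $\gcd(r,s,t)=1$, the triple $(s,t,-r)$ is the primitive generator of the integer lattice $\ker M\cap\Z^3$, so $(s,t,-r)=\pm v/d$ with $d\ge 1$ the gcd of the three coordinates of $v$. Each coordinate of $(s,t,-r)$ is therefore bounded in absolute value by the corresponding coordinate of $v$, giving $|r|\le|\Delta|$, $|s|\le|h_pe_q-h_qe_p|$ and $|t|\le|g_pe_q-g_qe_p|$. Plugging in the elementary prime-factor estimates $g_i\le \log g/\log 2$ and $h_i\le \log h/\log 2$, together with $|e_i|\le \log(agh)/\log 2$ (using the trivial bounds $a\le agh$ and $\overline{\left(\overline{(a)_g}\right)_h}\le agh$), produces $|r|\le 2\log g\log h/(\log 2)^2$, $|s|\le 2\log h\log(agh)/(\log 2)^2$ and $|t|\le 2\log g\log(agh)/(\log 2)^2$. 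The stated inequalities then drop out of the observation that $agh\ge 4$ implies $\log(agh)\ge 2\log 2$, i.e.\ $2/\log 2\le\log(agh)/(\log 2)^2$.

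The only step that requires any argument at all beyond direct computation is the rank-two claim for the left block of $M$, which I have already sketched parenthetically above; everything else is one cross product and three routine prime-valuation estimates, so I do not anticipate any real obstacle.
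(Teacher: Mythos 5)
Your proof is correct and follows essentially the same route as the paper: both reduce the relation $\alpha^r=g^sh^t$ to valuation equations at two primes at which the valuation vectors of $g$ and $h$ are linearly independent, identify $(s,t,-r)$ up to an integer factor with the cross product of the corresponding rows, and conclude with the estimates $e_{g,i}\le \log g/\log 2$, $e_{h,i}\le \log h/\log 2$, $|e_{\alpha,i}|\le \log(agh)/\log 2$. If anything, your write-up is more complete than the paper's terse version, since you justify via the rank-one kernel lattice and the primitivity of $(s,t,-r)$ why the true exponents are dominated by the cross-product coordinates, and you absorb the factor $2$ from the determinant bound explicitly using $\log(agh)\ge 2\log 2$.
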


\begin{proof}
Now let $p_1,p_2$ be primes that divide $gh$, let $e_{\beta,i}=v_{p_i}(\beta)$ for $i=1,2$
and $\beta\in\{g,h,\alpha\}$. Here $v_p(x)$ denotes the $p$-adic valuation of $x$. Further, assume that the vectors $(e_{g,1},e_{h,1})$ and $(e_{g,2},e_{h,2})$
are
linearly independent over $\Z^2$.  Note that this pair of primes exists since $g$ and $h$ are multiplicatively independent. A technical but easy computation
shows that
\begin{align*}
s=&(e_{h,1}e_{\alpha,2}-e_{h,2}e_{\alpha,1})e_{\alpha_2};\\
t=&(e_{g,1}e_{\alpha,2}-e_{g,2}e_{\alpha,1})e_{\alpha_2};\\
r=&(e_{g,2}e_{h,1}-e_{g,1}e_{h,2})e_{\alpha_2}
\end{align*}
is a solution and since $(e_{g,1},e_{h,1})$ and $(e_{g,2},e_{h,2})$ are linearly independent $r\neq 0$. The statement of the lemma now follows from the simple
estimates
\begin{align*}
e_{h,i}\leq& \frac{\log h}{\log 2}; & e_{g,i}\leq& \frac{\log g}{\log 2};
& |e_{\alpha,i}|\leq& \frac{\log (agh)}{\log 2};
\end{align*}
for $i=1,2$.
\end{proof}

We multiply inequality \eqref{IEq:Log} by $r$ and obtain
\begin{equation}\label{IEq:Log2}
\left|(n+s)\log g-(k+t)\log h\right|<\frac{r11}{9h^{-m}}
\end{equation}

This time we apply the following result due to Laurent et. al. \cite{Laurent:1995}:

\begin{theorem}\label{Th:twologs}
Let $\alpha_1$ and $\alpha_2$ be two positive, real, multiplicatively independent
elements in a number field of degree $D$ over $\Q$. For $i=1,2$, let $\log\alpha_i$ be any determination of the logarithm of
$\alpha_i$, and let $A_i>1$ be a real number satisfying
$$ \log A_i \geq \max \{h(\alpha_i),|\log\alpha_i|/D,1/D\}.$$
Further, let $b_1$ and $b_2$ be two positive integers.
Define
$$b'=\frac {b_1}{D\log A_2}+\frac {b_2}{D\log A_1} \quad \text{and}
\quad \log b=\max\left\{\log b'+0.14,21/D, \frac 12\right\}.$$ Then
$$ |b_2 \log\alpha_2-b_1 \log\alpha_1|\geq
\exp \left(-24.34 D^4(\log b)^2 \log A_1 \log A_2\right). $$
\end{theorem}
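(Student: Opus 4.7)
The plan is to prove Theorem \ref{Th:twologs} by contradiction using Laurent's \emph{interpolation determinant method}, which is the technique that produces the specific numerical constant $24.34$. Set $\Lambda:=b_2\log\alpha_2-b_1\log\alpha_1$. If $|\Lambda|$ already exceeds the claimed bound there is nothing to prove, so I would assume a strict inequality
\[
\log|\Lambda| < -24.34\, D^4 (\log b)^2 \log A_1 \log A_2
\]
and derive a contradiction by constructing an auxiliary algebraic number whose absolute value is simultaneously forced to be large (by an arithmetic Liouville-type estimate) and small (by an analytic Schwarz-lemma estimate).

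First I would introduce integer parameters $L,R,S,T_1,T_2$ chosen as explicit (near-optimal) functions of $D,\log A_1,\log A_2$ and $\log b$, with $L$ of the same order of magnitude as $D^2(\log b)^2\log A_1\log A_2$ so that the eventual comparison produces the desired constant. Then I would form the $L\times L$ interpolation determinant
\[
\Delta=\det\bigl(\, (r_j+s_j\,\beta)^{t_i}\,\alpha_1^{r_j k_i}\alpha_2^{s_j k_i}\,\bigr)_{1\le i,j\le L},
\]
where $\beta=b_1/b_2$, the row indices $(k_i,t_i)$ range over a rectangle of size $K\times T$, and the column indices $(r_j,s_j)$ range over a rectangle of size $R\times S$, with the products $KT$ and $RS$ both equal to $L$. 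The key idea of the method is that $\Delta$ is the value at the tuple $\{r_j+s_j\beta\}$ of a Vandermonde-like determinant built from the entire functions $z\mapsto z^{t}\exp(k(r\log\alpha_1+s\log\alpha_2))$, so that the hypothesis that $|\Lambda|$ is tiny lets us replace $b_1\log\alpha_1$ by $b_2\log\alpha_2$ in the analysis with only exponentially small error.

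Next I would derive the two competing bounds on $|\Delta|$. For the \emph{analytic upper bound}, I would apply the Schwarz lemma to the entire function $\varphi(z)=\det(\phi_i(z_j+z\xi_j))$ built from the rows of $\Delta$ on a disk of carefully chosen radius, exploiting the fact that if $|\Lambda|$ is exponentially small then $\varphi$ is nearly divisible by a high power of $z$; this yields an estimate of the shape
\[
\log|\Delta|\le -c_1\,L\,(\log b)\,\log A_1\log A_2 + L\log\!\bigl(1+|\Lambda|\cdot(\text{polynomial factors})\bigr).
\]
For the \emph{arithmetic lower bound}, $\Delta$ lies in $\Q(\alpha_1,\alpha_2)$, a field of degree $\le D$ over $\Q$, so by bounding the house and denominator of each matrix entry via the heights $h(\alpha_i)\le\log A_i$ and applying the Liouville inequality (equivalently the product formula), one obtains
\[
\log|\Delta|\ge -c_2\,L\,\log A_1\log A_2 - c_3 L\log b,
\]
unless $\Delta=0$, in which case a standard zero-estimate argument (e.g.\ Philippon's or a direct Vandermonde nonvanishing argument using the multiplicative independence of $\alpha_1,\alpha_2$) would produce an auxiliary nontrivial polynomial relation and hence a separate contradiction.

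The main obstacle will be the \emph{quantitative optimization}: to recover the clean constant $24.34$ (rather than merely \emph{some} absolute constant), every choice of $L,R,S,T,K$ must be pushed to its near-optimal value, and all logarithmic factors, volumes of lattice points, factorials (via Stirling), and Schwarz-lemma radii must be tracked explicitly. The parameter $\log b=\max\{\log b'+0.14,\,21/D,\,1/2\}$ and the precise shape of $b'$ are forced on one by balancing the two contributions $b_1/(D\log A_2)$ and $b_2/(D\log A_1)$ coming from the column rectangle $R\times S$ in the determinant. Once these optimizations are carried out and the final inequalities between the analytic and arithmetic bounds are compared, the assumed estimate $\log|\Lambda|<-24.34 D^4(\log b)^2\log A_1\log A_2$ contradicts $|\Delta|\ge 0$ being simultaneously too small and too large, completing the proof.
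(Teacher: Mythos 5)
This statement is not proved in the paper at all: it is Theorem~2 (in the form of an explicit corollary) of Laurent, Mignotte and Nesterenko \cite{Laurent:1995}, quoted verbatim as an external tool, so there is no internal proof to compare against. Your outline does correctly identify the method by which the cited authors establish it --- an interpolation determinant $\Delta$ indexed by monomials $z^{t}\alpha_1^{rk}\alpha_2^{sk}$ evaluated at the points $r+s\beta$ with $\beta=b_1/b_2$, squeezed between a Schwarz-lemma upper bound (using that $|\Lambda|$ small lets one trade $b_1\log\alpha_1$ for $b_2\log\alpha_2$) and a Liouville-type lower bound, with a zero estimate handling the case $\Delta=0$. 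In that sense the strategy is the right one and matches the source.

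However, as a proof of \emph{this} statement the proposal has a genuine gap, and you name it yourself: the entire content of the theorem is the explicit constant $24.34$ together with the precise normalizations $\log A_i\geq\max\{h(\alpha_i),|\log\alpha_i|/D,1/D\}$ and $\log b=\max\{\log b'+0.14,\,21/D,\,1/2\}$. None of these is derived in your sketch --- the parameters $L,R,S,T,K$ are never specified, the zero estimate is only gestured at, and the final comparison of the two bounds on $\log|\Delta|$ is not carried out, so the inequality with $24.34\,D^4(\log b)^2\log A_1\log A_2$ is asserted rather than proved. A qualitative version of the two-logarithm bound would follow from your outline, but the quantitative statement used in Section~\ref{Sec:MainLemma} (where the constants $141$, $142$ and the threshold $n>37000$ all trace back to $24.34$ and the $0.14$, $21/D$ in $\log b$) does not. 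The honest course here is the one the paper takes: cite \cite{Laurent:1995} rather than reprove it.
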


We choose $\alpha_1=g$ and $\alpha_2=h$, thus we have $D=1$. Put $\log A_1=\log g$ and $\log A_2=(\log h)/(\log 2)\geq 1$ and start with estimating $b'$:
\begin{align*}
b'=& \frac{(n+s)\log 2}{\log h}+\frac{k+t}{\log g}\\
<&\frac{2n}{\log h}+\frac 1{\log g}+\frac 2{\log h}+\stackrel{\leq \frac {s\log 2}{\log h}+\frac t{\log g}}{\overbrace{\left(\frac 1{(\log 2)^2}+\frac 1{(\log
2)^3}\right)(\log agh)^2}}\\
<&\frac{2n}{\log h}+6.3(\log agh)^2<\frac {4n}{\log h}<6n
\end{align*}
provided that $n>m>3.15\log h(\log agh)^2$. Note that the first inequality is true because of
\begin{multline*}
k=\left\lfloor\frac{\log N-\log \overline{\left(\overline{(a)_g}\right)_h}}{\log h} \right\rfloor\\
<\frac{\log a+(n+1)\log g-\log a+\log g+\log h}{\log h}=n\frac{\log g}{\log h}+\frac{2\log g}{\log h}+1.
\end{multline*}
The inequality for $b'$ now implies that we may choose
$$\log b=2\log n>\max\{\log n+\log 6+0.14,21\}$$
provided that $n>m>37000$. Now Theorem \ref{Th:twologs} yields
\begin{equation*}
141(\log n)^2 \log g \log h<m\log h+\log (9/11)-\log\left(\frac{\log g (\log agh)^2}{(\log 2)^3}\right).
\end{equation*}
Let us assume $n>m>(\log g) (\log agh)^2/(\log 2)^3$ and since we also assume $n>m>37000$ this last inequality turns into
$$ 142(\log n)^2 \log g>m.$$
Therefore we have completely proved Lemma \ref{lem:Main}.

Note that all our assumptions on $m$ made during the proof together with the bounds for $m$ cumulate in the lower bound of Lemma \ref{lem:Main}.

\section{Proof of Theorems \ref{th1} and \ref{th2}}\label{Sec:ProofTh}

We start with the proof of Theorem \ref{th1}. In this case $N=a g^n+\overline{(a)_g}$ and therefore
$$m=n-\left\lfloor\frac{\log a}{\log g}\right\rfloor>n-\frac{\log a}{\log g}-1.$$
In view of Lemma \ref{lem:Main} this implies: If $n>C(a,g,h,n)+(\log a)/(\log g)+1$, then $N$ is not a palindrome in base $h$.
Let us consider the two inequalities $n>142 (\log n)^2+(\log a)/(\log g)$ and
$n> 2.022\cdot 10^{10} \log g \log (agh) \log n+(\log a)/(\log g)$.
We note that the largest solution to $n=A\log n+B$ is smaller than the largest solution to $n/\log n=A+B/\log A$ and the largest solution to
$n/\log n=C$ is smaller than $x(\log x)^2$ provided $x>e^2$. Now let $A=2.022\cdot 10^{10} \log g \log (agh)$ and $B=(\log a)/(\log g)+1$, then
\begin{multline*}
A+\frac{B}{\log A}=2.022\cdot 10^{10} \log g \log (agh)+\frac{\frac{\log a}{\log g}+1}{23.73+\log(\log g\log (agh)}\\ <2.023\cdot 10^{10} \log g \log (agh).
\end{multline*}
Therefore the inequality $n> 2.022\cdot 10^{10} \log g \log (agh) \log n+(\log a)/(\log g)+1$ is fulfilled whenever
$$n>5.11\cdot 10^{12}\log g \log (agh) (\log(\log g \log (agh)))^2.$$

Similarly the largest solution to $n=A(\log n)^2+B$ is smaller than the largest solution to $n/(\log n)^2=A+B/(\log A)^2$ and the largest
solution to
$n/(\log n)^2=C$ is smaller than $x(\log x)^3$ provided $x>62$. This time we put $A=142$ and $B=(\log a)/(\log g)+1$ and
$$A+\frac{B}{(\log A)^2}<142+\frac{\log a}{24.56 \log g}+0.041.$$
If $a\leq 2$, then the lower bound for $n$ will be $17295$ which is absorbed by the much larger bound found in the paragraph above. Therefore we may assume that
$a\geq 3$ and obtain
$$A+\frac{B}{(\log A)^2}=142+\frac{\log a}{24.56 \log g}<130 \log a$$
and therefore the inequality $n>142 (\log n)^2+(\log a)/(\log g)$ is fulfilled if
$$n>1.91\cdot 10^7 \log a (\log\log a)^3.$$
Therefore the proof of Theorem \ref{th1} is complete.

We turn now to the proof of Theorem \ref{th2}. We consider palindromes described in Lemma \ref{lem:Main} with $a<g$ and $m=(\log n)^3$. Then by Lemma
\ref{lem:Main} we know
that for some constant $C_{g,h}$ depending only on $g$ and $h$ we have $m>C(a,g,h,n)$ (see Lemma \ref{lem:Main}) for all $n>C_{g,h}$, i.e. these palindromes
cannot be palindromes
in base $h$ if $n>C_{g,h}$. On the other hand there are $\gg x^{1/2-\epsilon}$ palindromes of the described form provided $1/(\log n)^3>\epsilon$, which
proves Theorem \ref{th2}.

\section{Numerical Considerations}\label{Sec:Numeric}

The purpose of this section is to consider the case $g=10$ and $h=2$ more closely and think of it as a model case. The aim is to find decimal palindromes that
are also
binary palindromes, however we did not find many such palindromes.

\begin{proposition}\label{Prop:SmallSimulPal}
Let $N<10^{18}$ be a palindrome in base $10$ which is also a binary palindrome, then $N$ is one of the following
$62$ palindromes:
\begin{gather*}
1,3,5,7,9,33,99,313,585,717,7447,9009,15351,32223,39993,53235,\\
53835,73737,585585,1758571,1934391,1979791,3129213,5071705,5259525,\\
5841485,13500531,719848917,910373019,939474939,1290880921,\\
7451111547,10050905001,18462126481,32479297423,75015151057,\\
110948849011,136525525631,1234104014321,1413899983141,\\
1474922294741,1792704072971,1794096904971,1999925299991,\\
5652622262565,7227526257227,7284717174827, 9484874784849,\\
34141388314143,552212535212255,1793770770773971,3148955775598413,\\
933138363831339, 10457587478575401, 10819671917691801, 18279440804497281,\\
34104482028440143, 37078796869787073, 37629927072992673, 55952637073625955, \\
161206152251602161, 313558153351855313.
\end{gather*}
\end{proposition}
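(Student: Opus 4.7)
The proposition asserts the completeness of an explicit list, so the proof is necessarily a finite verification. My plan is a direct enumeration: generate every decimal palindrome $N<10^{18}$ and keep those whose binary expansion is also palindromic.

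I would enumerate decimal palindromes length by length, by iterating over the ``left half''. For length $\ell=2k$ a palindrome has the form $N=a\cdot 10^k+\overline{(a)_{10}}$ with $a\in\{10^{k-1},\dots,10^k-1\}$; for $\ell=2k+1$ one additionally loops over a middle digit $d\in\{0,\dots,9\}$ and sets $N=10a\cdot 10^k+d\cdot 10^k+\overline{(a)_{10}}$. Summing the counts over $\ell\le 18$ gives on the order of $2\cdot 10^9$ candidates, well within reach of the computational resources acknowledged in the paper.

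For each candidate $N$, I would test whether its base-$2$ expansion equals its reversal. Since $N<2^{60}$, this is an $O(\log N)$ bit-manipulation step, and the loop over ranges of $a$ is embarrassingly parallel. As an independent sanity check, I would also run the symmetric enumeration, generating binary palindromes below $10^{18}$ (again about $2\cdot 10^9$ of them) from their left halves and filtering by the decimal palindrome condition; the two passes must yield the same set, and that set should coincide with the 62 numbers displayed.

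Note that Lemma \ref{lem:Main} does not materially prune this short-length regime: for $n\le 18$ the bound $C(a,10,2,n)$ appearing there is already on the order of $10^6$ or larger, so no candidate below $10^{18}$ is excluded by the lemma. The role of Lemma \ref{lem:Main} and Theorem \ref{th1} is to cut off the a priori infinite tail at large $n$; the portion below $10^{18}$ is genuinely enumerative. The only obstacle is therefore computational rather than mathematical, namely organising and cross-validating the roughly $2\cdot 10^9$ binary-palindrome tests efficiently.
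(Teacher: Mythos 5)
Your proposal is correct and matches the paper's own proof: the paper likewise enumerates all decimal palindromes below $10^{18}$ by constructing them from their left halves (treating even and odd digit lengths separately) and tests each for binary palindromicity by comparing digits from both ends, aborting early on a mismatch. Your added cross-check via enumerating binary palindromes and your remark that Lemma \ref{lem:Main} gives no pruning in this range are sensible but do not change the argument.
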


\begin{proof}
For all $a<10^{10}$ we construct decimal palindromes $N<10^{18}$ with an even number of digits by reversing the digits of $a$ and appending the reversed string
of digits at the string of digits of $a$,
i.e. if $a=\sum_{i=0}^{n} a_i 10^i$ we compute the palindrome
$$N=\sum_{i=0}^{n}a_i 10^{n+1+i}+\sum_{i=0}^n a_{n-i} 10^i.$$
Similarly we construct for all $a<10^{9}$ palindromes $N<10^{18}$ with an odd number of digits by
$$N=\sum_{i=0}^{n}a_i 10^{n+i}+\sum_{i=0}^{n-1} a_{n-i} 10^i.$$
With this procedure we have a complete list of all decimal palindromes $N<10^{18}$.

Now we test for each decimal palindrome $N$ whether it is a binary palindrome by the following algorithm. First we compute the number of binary digits
$$k=\left\lfloor \frac{\log N}{\log 2}\right\rfloor +1.$$
Let us put $n_k=n_0=N$ and compute subsequently for all $0\leq i \leq \lfloor k/2 \rfloor$ the $i$-th highest and $i$-th lowest binary digits of $N$
$$d_{k-i}=\left\lfloor \frac{n_{k-i}}{2^{k-i}}\right\rfloor, \qquad d_i=n_i \mod 2$$
and
$$n_{k-i-1}=\frac{n_{k-i}-2^{k-i}d_{k-i}}2,\qquad n_{i+1}=\frac {n_i-d_i}2$$.
If $d_{k-i}\neq d_i$ for some $i$ then $N$ is not a binary palindrome.
If $d_{k-i}=d_i$ for all $0\leq i \leq \lfloor k/2 \rfloor$ then $N$ is also a binary palindrome.

Note that if $N$ is not a binary palindrome we do not have to compute all binary digits of $N$ and in many cases after computing a few digits of $N$ will yield
a result.
Indeed implementing this algorithm in sage \cite{sage} and running it on a usual workstation we used about ?? hours of CPU time.
\end{proof}

\begin{remark}
We want to note that in the On-Line Encyclopedia of Integer Sequences \cite{OEIS} the list of the palindromes in bases $2$ and $10$ as sequence A060792. 
However, the list includes only the simultaneous palindromes up to $7451111547$.
\end{remark}

\begin{proposition}\label{Prop:smalla}
Let $N=a10^n+\overline{(a)_{10}}$ be a binary palindrome with $10\nmid a$ and $a<\min\{10^6,10^n\}$, then it is already
contained in the list of palindromes in Proposition~\ref{Prop:SmallSimulPal}.
\end{proposition}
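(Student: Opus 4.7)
The plan is to combine Theorem~\ref{th1} with a Baker--Davenport type reduction of the inequalities furnished by Lemma~\ref{lem:Main}, and then to finish by an exhaustive computer search, mirroring the algorithm of Proposition~\ref{Prop:SmallSimulPal}.

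First, applying Theorem~\ref{th1} with $g=10$ and $h=2$ gives, uniformly for every $a<10^6$ with $10\nmid a$, the a~priori bound $n\le 2.65\cdot 10^{15}$ announced in the introduction right after that theorem; in particular the set of possible pairs $(a,n)$ is finite.

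Second, fix $a$ in the range $1\le a<10^6$, $10\nmid a$, and form the rational number $\alpha=a/\overline{(\overline{(a)_{10}})_2}$. If $\alpha$, $10$, $2$ are multiplicatively independent, then inequality \eqref{eq:mainlem1} of Lemma~\ref{lem:Main} furnishes an extremely small linear form in the three logarithms $\log\alpha$, $\log 2$, $\log 10$. A standard LLL-based Baker--Davenport reduction on the associated rank-3 approximation lattice, with a multiplier of size roughly $(2.65\cdot 10^{15})^4$, reduces the bound to $m\le M_0$ for some $M_0$ of order a few hundred, and hence $n\le M_0+6$. In the exceptional case $\alpha^r=10^s 2^t$ with $(r,s,t)\ne 0$, the triple $(r,s,t)$ is explicitly bounded by Lemma~\ref{lem:dependence}, and inequality \eqref{eq:mainlem2} becomes a linear form in only the two logarithms $\log 2$ and $\log 10$; expanding the continued fraction of $\log 10/\log 2$ far enough and applying Legendre's theorem yields an equally small bound on $n$.

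Third, for every pair $(a,n)$ with $a<10^6$, $10\nmid a$ and $n$ at most the reduced bound, explicitly compute $N=a\cdot 10^n+\overline{(a)_{10}}$ and test whether it is a binary palindrome using the bit-by-bit routine described in the proof of Proposition~\ref{Prop:SmallSimulPal}. Every surviving $N$ is compared against the $62$ palindromes in the list of Proposition~\ref{Prop:SmallSimulPal}; since that list is complete up to $10^{18}$, the proposition follows once the search produces no candidate outside it.

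The main obstacle is computational rather than conceptual. The reduction in Step~2 has to be performed for each of the roughly $9\cdot 10^5$ admissible values of $a$ separately, because $\alpha$ depends on $a$; moreover the multiplicative-dependence alternative must be detected and handled automatically, requiring for each $a$ the factorisations of $a$, of $\overline{(\overline{(a)_{10}})_2}$ and of $10$. Each individual LLL reduction or continued-fraction computation is nevertheless fast and the task is embarrassingly parallel, so the whole verification is feasible on a modest workstation in the same spirit as Proposition~\ref{Prop:SmallSimulPal}.
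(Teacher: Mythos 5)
Your proposal follows essentially the same route as the paper: the a priori bound $n\le 2.65\cdot 10^{15}$ from Theorem~\ref{th1}, the case split on whether $\alpha$, $2$, $10$ are multiplicatively dependent, a Baker--Davenport type reduction of \eqref{eq:mainlem1} (the paper uses the classical continued-fraction version of Lemma~\ref{Lem:BakDav1} with precomputed convergents of $\log 10/\log 2$, exploiting that only $\delta$ depends on $a$, rather than a fresh rank-3 LLL lattice per $a$), a continued-fraction argument for \eqref{eq:mainlem2} in the dependent case, and a final exhaustive search compared against the list of Proposition~\ref{Prop:SmallSimulPal}. The differences are implementation details, not mathematical ones.
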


In order to prove this proposition we have to consider the Diophantine inequalities \eqref{IEq:Log} and \eqref{IEq:Log2}. An upper bound for $m$ is given by
Lemma \ref{lem:Main} but this bound is very huge. Therefore we will use continued fractions in case of \eqref{eq:mainlem2} and a method due to Baker and
Davenport \cite{Baker:1969} to reduce the upper bound in case of \eqref{eq:mainlem1}. Let us state a variant of this reduction method:

\begin{lemma}\label{Lem:BakDav1}
Given a Diophantine inequality of the form
\begin{equation}\label{IEq:BakerDav}
|n_1+n_2\epsilon+\delta|<c_1\exp(-n_2 c_2)
\end{equation}
Assume $n_2<X$ and assume that there is a real number $\kappa>1$ and also assume there exists a convergent $p/q$ to $\epsilon$ with
$X/q<1/(2\kappa)$ such that
$$\|q \epsilon\|<\frac 1{2 \kappa X} \quad \text{and} \quad \|q \delta\|>\frac 1{\kappa},$$
$\|q\delta\|>\frac 1/\kappa$, where $\|\cdot\|$ denotes the distance to the nearest integer. Then we have
$$n_2\leq\frac{\log (2 \kappa q c_1)}{c_2}.$$
\end{lemma}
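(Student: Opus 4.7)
The plan is to multiply the given inequality through by $q$ and isolate the integer contribution from the small fractional parts, then use the two hypotheses on $\|q\epsilon\|$ and $\|q\delta\|$ to force a lower bound on the left hand side. After multiplication we get
$$\left|qn_1 + qn_2\epsilon + q\delta\right| < q c_1 \exp(-n_2 c_2).$$
Writing $\theta := q\epsilon - p$, since $p/q$ is a convergent of $\epsilon$ the integer $p$ is the nearest integer to $q\epsilon$, so $|\theta| = \|q\epsilon\| < 1/(2\kappa X)$. Setting $A := qn_1 + n_2 p \in \Z$, the inequality rewrites as
$$\left|A + n_2\theta + q\delta\right| < q c_1 \exp(-n_2 c_2).$$

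Next I would bound the left hand side from below. Because $n_2 < X$ and $|\theta| < 1/(2\kappa X)$, we have $|n_2\theta| < 1/(2\kappa)$. Since shifting by the integer $A$ does not change the distance to the nearest integer, and using the reverse triangle inequality $\|x+y\| \geq \|x\| - |y|$ for the ``nearest integer'' seminorm, one gets
$$\left\|A + n_2\theta + q\delta\right\| = \left\|n_2\theta + q\delta\right\| \geq \|q\delta\| - |n_2\theta| > \frac{1}{\kappa} - \frac{1}{2\kappa} = \frac{1}{2\kappa}.$$
Since $|x| \geq \|x\|$ for every real $x$, this yields $|A + n_2\theta + q\delta| > 1/(2\kappa)$.

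Combining the upper and lower bounds gives $1/(2\kappa) < q c_1 \exp(-n_2 c_2)$, and taking logarithms produces exactly $n_2 \leq \log(2\kappa q c_1)/c_2$. The argument is essentially bookkeeping rather than hard analysis; the only substantive ingredient is the identity $|\theta| = \|q\epsilon\|$, which rests on the classical fact that the numerator of a convergent realizes the nearest-integer approximation to $q\epsilon$. The auxiliary hypothesis $X/q < 1/(2\kappa)$ does not appear to be used in the core estimate above; it presumably records a practical condition that guarantees existence of a convergent with the stated two properties when one searches through the continued fraction expansion of $\epsilon$ in applications.
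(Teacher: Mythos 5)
Your proof is correct and follows essentially the same route as the paper: multiply by $q$, absorb the integer part, and use $\|q\delta\|-|n_2|\,\|q\epsilon\| > 1/\kappa - 1/(2\kappa)$ to get the lower bound $1/(2\kappa)$. The only cosmetic difference is that the paper derives $|n_2|\,\|q\epsilon\|<1/(2\kappa)$ from the convergent bound $|q\epsilon-p|<1/q$ together with $X/q<1/(2\kappa)$, whereas you use the equivalent stated hypothesis $\|q\epsilon\|<1/(2\kappa X)$ directly.
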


\begin{proof}
We consider inequality \eqref{IEq:BakerDav} and we multiply it by $q$. Then under our assumptions we obtain
$$c_1q\exp(-n_2 c_2)>|qn_1+n_2q\epsilon+q\delta|
\geq \left|\|n_2q\epsilon\|-\|q\delta\|\right|> \frac1{2\kappa}.$$
The last inequality holds since $p/q$ is a convergent to $\epsilon$ and therefore $|\epsilon q-p|< 1/q$.
Solving this inequality for $n_2$ we obtain the lemma.
\end{proof}

\begin{proof}[Proof of Proposition \ref{Prop:smalla}]
Now let us apply the second part of Lemma \ref{lem:Main} to the present situation. Since we assume $a< 10^6$ we have $m\geq n-(\log a)/(\log 10)> n-6$. And
therefore
either $n>30$ or one of the two inequalities \eqref{eq:mainlem1} and \eqref{eq:mainlem2} are fulfilled with $n\leq 2.65 \cdot 10^{15}:=X$ - since the upper
bound for $n$ obtained in
Theorem \ref{th1}. We have to distinguish between the two cases $\alpha^r=10^s2^t$ or $\alpha$, $2$ and $10$ are multiplicatively independent.

In the second case we divide by $\log 2$ and obtain inequality \eqref{IEq:BakerDav}, with $n_1=k$, $n_2=n$, $\epsilon=(\log 10)/(\log 2)$,
$\delta=(\log \alpha)/(\log 2)$, $c_1=(11\cdot2^6)/(9\log 2)$ and $c_2=\log 2$. With this choice we apply Lemma \ref{Lem:BakDav1}. Since $\epsilon$ is
independent of $a$
we can precompute suitable pairs $(q,\kappa)$ which may be applied to our situation. Therefore we compute the first $50$ convergents to $(\log 10)/(\log
2)$. For each convergent $p/q$
with $q>1.06 \cdot 10^{16}$ we form the pair $(q,\kappa)$with $\kappa:=2X/q$ and get a list of $16$ potential pairs applicable to Lemma \ref{Lem:BakDav1}. Now
let us fix $a$.
We subsequently test whether in our list is a pair $(q,\kappa)$ such that $\|q\delta\|>1/\kappa$, hence by Lemma \ref{Lem:BakDav1} we get a new bound
that should be rather small
and indeed in all cases our new bound yields $n\leq 81$. Further we want to emphasize that it is highly improbable that for a given $a$ no pair of our list of
candidates yields an application
of Lemma \ref{Lem:BakDav1} and therefore no new upper bound for $n$. Therefore we are left to test all remaining $n$ for our fixed $a$, which can be done by a
quick computer search.

In case of $\alpha^r=10^s2^t$ for some integers $r,s,t$ with $r^2+s^2+t^2\neq 0$ we know that we can choose $r=1$. Indeed the free $\Z$-Module generated by
$\{\log 10,\log 2\}$ is the same
as the free $\Z$-Module generated by $\{\log 5,\log 2\}$ and $\log \alpha$ is contained in the later one and therefore also in the first $\Z$-Module. Now we
obtain by Lemma \ref{lem:Main}
and in particular by inequality \eqref{eq:mainlem2}
$$\left|\frac{\log 10}{\log 2}-\frac{k+t}{n+s}\right|<\frac{11\cdot 2^6}{9\cdot 2^{-n}(n+s)\log 2}.$$
Note that therefore $(k+t)/(n+s)$ has to be a convergent to $(\log 10)/(\log 2)$ unless $n\leq 30$ or
$$\frac{1}{2(n+s)^2}<\frac{11\cdot 2^8}{9\cdot 2^{n}(n+s)\log 2}.$$
Let us note that this inequality does not hold for large $n$, in particular in all cases that we consider we can choose the bound $n\geq 30$. Therefore we know
that $n+s$ has to
be a multiple of $q$, where $p/q$ is a convergent to $(\log 10)/(\log 2)$, i.e. $n=kq-s$ for some positive integer $k$.
But, already for rather small $k$ and fixed convergent $p/q$ this choice will contradict the inequality
\begin{equation}\label{IEq:convergents}
 \left|\frac{\log 10}{\log 2}-\frac pq\right|<\frac{11\cdot 2^6}{9\cdot 2^{kq-s}(kq)\log 2}.
\end{equation}
We claim that inequality \eqref{IEq:convergents} is never fullfilled for $n\geq 30$. Since Lemma \ref{lem:dependence} we know $s\leq 34$ and therefore we may
assume $q<2.66\cdot 10^{15}$.
In particular we have to prove inequality \eqref{IEq:convergents} for $32$ convergents. If we replace in \eqref{IEq:convergents} the quantities $kq-s=n$ and
$kq=n+s$ by $30$ - we may do so
since we assume $n\geq 30$ - then inequality \eqref{IEq:convergents} is never satisfied by the first $8$ convergents. For the remaining $24$ cases we replace in
\eqref{IEq:convergents} $kq-s=n$
by $q-34$ and $kq$ by $q$. If this new inequality still holds also \eqref{IEq:convergents} holds. A quick computation using a computer algebra system like sage
\cite{sage} resolves this case.

Therefore we also have in this case a very efficient method to find all simultaneous palindromes, i.e. we only have to test all $n\leq 30$.

We implemented the idea above in sage \cite{sage} and computed for all $2< a < 10^6$ with $10\nmid a$ and $\overline{(a)_{10}}$ is odd all $n$ with
$a<\min\{10^6,10^n\}$ such that
$N=a10^n+\overline{(a)_{10}}$ is a binary palindrome. Note that in case of $\overline{(a)_{10}}$ is even then the last binary digit of $N$ would be $0$ and $N$
would not be a binary palindrome.
The computer search took on a single PC about $80$ minutes.
\end{proof}

\section{Other bases}\label{Sec:Problem}

In this section we want to discuss Problem \ref{Prob:Simul_Pal} for further base pairs $(h,g)$. In case of $(h,g)=(2,10)$ the preceeding sections show that
there are only few integers that are simultaneously
palindromes in bases $2$ and $10$. Looking at our results we even guess that there are only finitely many simultaneous palindromes for the bases
$2$ and $10$. In this last section we want to present shortly our
numeric considerations for other base pairs. In particular, we considered the pairs $(2,3)$, $(6,15)$, $(5,7)$, $(11,13)$ and $(7,29)$ and counted the number
$N$ of simultaneous palindromes
smaller than some bound $B$. Our results are listed in Table \ref{Tab:SimPal} below.
The algorithms were implemented in sage \cite{sage} and were run on a single PC.

\begin{table}[ht]
 \caption{Number of simultaneous palindromes}\label{Tab:SimPal}
 \begin{tabular}{|c|c|r|r|r|}
  \hline $g$ & $h$ & $B$ & $N$ & Time \\ \hline\hline
  $2$ & $3$ & $3^{66}\simeq  3.09\cdot10^{31}$ & $9$ & \\
  $6$ & $15$ & $6^{20}\simeq 3.66 \cdot 10^{15}$ & $58$ & 1d 5h\\
  $5$ & $7$ & $5^{24}\simeq 5.96\cdot 10^{16}$ & $57$ & 7d 5h\\
  $11$ & $13$ & $13^{14}\simeq 3.94 \cdot 10^{15}$ & $58$ &  23h\\
  $7$& $29$ & $7^{20}\simeq 7.98 \cdot 10^{16}$ & $73$ & 7d 3h \\
  $2$ & $10$ & $10^{18}$ & $62$ & 31d 17h \\\hline
 \end{tabular}
\end{table}

Let us note that the case $g=2$ and $h=3$ is included in the On-Line Encyclopedia of Integer sequences \cite{OEIS} as sequence A060792.

Looking at Table \ref{Tab:SimPal} the number of palindromes to the bases $2$ and $3$ simulaneously is very small and indeed we are led by our numeric
computations to the following problem:

\begin{problem}
Are there only finitely many positive integers that are palindromes in bases $2$ and $3$ simultaneously? If yes, how many are there? If there are 
infinitely many, find an asymptotic formula for the number of positive integers $\leq N$ that are palindromes in bases $2$ and $3$ simultaneously.
\end{problem}

\section*{Acknowledgement}
We want to thank Neal Sloan who pointed out that already a list of simultaneous palindromes in bases $2$ and $3$ respectively $2$ and $10$ exist in \cite{OEIS}.

The research was supported in part by the University of Debrecen,
and by grants K100339 and NK104208 of the Hungarian National Foundation for Scientific Research.
This work was partially supported by the European Union and the European Social Fund through project Supercomputer, the national virtual lab (grant no.:
TAMOP-4.2.2.C-11/1/KONV-2012-0010).
The second author was supported by the Austrian Science Fund (FWF) under the project P~24801-N26.

\newcommand{\etalchar}[1]{$^{#1}$}
\def\cprime{$'$}

\label{lastpage-01}

\end{document}